\author{Laurent Fallot}
\address{EPOC - UMR CNRS 5805 - Equipe PROMESS, Université de Bordeaux - Bordeaux-INP, Avenue des Facultés, CS60099, 33400 Talence, France}
\email{Laurent.Fallot@bordeaux-inp.fr}
\title{Notes on proof by dichotomy}
\date{Revision 2.2 \today}
\newtheorem{definition}{Definition}
\newtheorem{theorem}{Theorem}
\newtheorem{coro}{Corollary}[theorem]
\def\setN{\mathbb{N}}
\begin{document}
\begin{abstract}
In this document we define a method of proof that we call proof by dichotomy. 
Its field of application is any proposition on the set of natural numbers $\setN$. 
It consists in the repetition of a step. 
A step proves the proposition for half of the members of an infinite subset $U$ of $\setN$ members for which we neither know if the proposition is verified nor not. 
We particularly study the case where the elements of $U$ are separated by the parity of the quotient of euclidean division by $2^k$. 
In such a case, we prove that if a natural $n$ does not verify the proposition, then it is unique.
\end{abstract}
\subjclass[2020]{03F07}
\keywords{logic, method of proof, dichotomy}
\maketitle
\section{Acknowledgement}
While working on Collatz problem, we introduced this method of proof.
We decide to write some personal notes on it. 
Having dropped this field of interest for a long time we have not an up to date bibliography on this field of mathematics. 
In the doubt about its knowledge, we choose to share these informal notes with the research community. 
We apologize if this work is well known. 
We do not intend to steal other one works.

\section{Introduction}
It is common to use a proof by induction in many sciences.
It is a very well known method of proof so we only recall its principle. 

This method of proof is used to validate a given proposition $P$ on the set of positive integers $\setN$.
It starts with the proof of a base case. Often it's 0 but not only. This done, we have to build the induction step which consists in being able to prove $P(N)$ independently of the value of $N$ from the hypothesis that for all $n<N,~P(n)$ is valid.

Until today, every attempt to use this method of proof to validate Collatz problem for instance failed. 
Many reason of these failures may be given. 
Mainly, to simplify, it seems that for any $N$ we may find a new length record greater than $N$.

Today, many publications about that problem tend to better the density of solved cases in $\setN$. 
Suppose that we get by this way a density of 1. 
Can we conclude that Collatz problem is solved ? 
It is the question we are trying to answer.

In this document, we introduce a new method of proof that we call proof by dichotomy.
We start with a subset of $\setN$ denoted by $U_0(P)$. 
This set is composed of all the natural numbers for which we want to prove proposition $P$. 
As in a proof by induction, a proof by dichotomy is an infinite sequence of a step. 
But it differs from a proof by induction in that during the $k^{th}$ step we split $U_{k-1}(P)$ into two equally distributed parts and we prove that $P$ is verified on exactly one of these parts. For the other one, $P$ stays unsolved.

So, we maintain along the proof two sets : $U_k(P)$ that contains all the numbers for which we don't know yet if they verify $P$ or not and $S_k(P)$ the set of numbers $n$ for which we proved $P(n)$. 

Note that during the step $k$, we can use the property that separate $U_{k-1}(P)$ into two parts and the fact that $P(n)$ is true for any $n\in S_{k-1}(P)$ to validate $P$ on one of the two parts.

A quick statistical study tends to say that if we have a proof of a proposition $P$ on the set $\setN$ by this way, then $P$ is verified for every natural numbers. 
But a closer look at this shows it is false in the general case. 

We particularly study an approach driven by the normalisation of any number as $2^k q + r$ where $r<2^k$. We prove that there can exist a natural number that does not verify the property $P$ we are trying to prove but this number is unique in this case.

Further, we give indications on when this counter-example of $P$ exists and what it can be.

\section{Position of the problem}
The term proposition denotes any application defined from $\setN$ into the boolean algebra $\{True, False\}$.

Let $n$ be a natural number and $P$ a proposition. We say that
\begin{itemize}
\item $n$ verifies or satisfies $P$ when $P(n)=True$,
\item $n$ denies or refutes $P$ when $P(n)=False$ and
\item $n$ is an unsolved case for $P$ when we don't know the value of $P(n)$ at a given step of a proof.
\end{itemize}

A counter-example of a proposition $P$ is a natural number that refutes $P$.

We define now what we call a "proof by dichotomy" on the natural numbers set $\setN$.

\begin{definition}[Proof by dichotomy]
Let $P$ be a proposition we want to prove on the natural numbers set $\setN$ or on one of its infinite subsets. 
The proof by dichotomy method is an infinite application of a step. This step consists into splitting the set of unsolved cases into two equally distributed parts and to be able to prove from the solved cases that the proposition is verified on exactly one of this part. Then, the second part is the set of unsolved staying cases given as the entry of next step.
\end{definition}

In fact, it is easy to see that this method of proof maintains two sequences of sets : 
\begin{description}
\item[a sequence \boldmath $\left ( U_k(p) \right ) _{k\in\setN}$] 
each of these $U_k(P)$ contains all the unsolved cases of $P$ after step $k$. It starts with $U_0(P)$ the infinite subset of $\setN$ on which we want to prove $P$.

\item[a sequence \boldmath $\left ( S_k(p)^{~} \right ) _{k\in\setN}$] each of these $S_k(P)$ is composed of solved cases during the $k$ first steps. We start with the empty set for $S_0(P)$.
\end{description}

At the $k^{th}$ step, we divide $U_{k-1}(P)$ in two equally distributed subsets~: $U_{k,1}(P)$ and $U_{k,2}(P)$. This partition of $U_{k-1}(P)$ is chosen so that we can prove $P$ is valid on only one of these subsets. The members of the second one stay unsolved cases.
This because, if we were able to either prove $P$ on both parts or prove $P$ on one of them and its counter-proposition on the other part, the proof is closed and out of interest here.

Let us say that we can prove that $P$ is valid on $U_{k,1}(P)$ without loss of generality.  
Then we know the result of $P(n)$ for every $n\in U_{k,1}(P)$ but we still don't when $n\in U_{k,2}(P)$. By this way, we can define $U_k(P) = U_{k,2}(P)$ and build $S_k(P) = S_{k-1}(P)\cup U_{k,1}(P)$.
More precisely, it is easy too see that any member of $S_k(P)$ verifies $P$ because we add to this set only the numbers for which we proved that they verify $P$. 
So, we can use this particularity on $S_{k-1}(P)$ to justify that $P$ is valid on $U_{k,1}(P)$.

This method of proof is not so far from a proof by induction. But, the induction step differs. Instead of proving that any $n<n_0$ verifies $P$ infers that $n_0$ verifies $P$ whatever $n_0$ is, we have to prove that from a set of unsolved natural numbers we can extract half of them by the use of the solved cases.

By this way, let us suppose that we start with $\setN$ as the first set of unsolved numbers against a given proposition $P$ to be proved. 
After a first step, we know that one out of two of them verifies $P$. But, we don't know the answer for the other half. 
A second step will lead to only a quarter of the numbers stay unsolved against $P$ and so on. 
So that, after the application of $k$ steps, we get that only ${1/{2^k}}^{th}$ of the natural numbers stay unsolved against $P$. 
Let us call this number the density of unsolved cases at step $k$. 
An infinite application of this step tends to prove that $P$ is verified for all the natural numbers. 
Indeed, the density of unsolved cases tends to $\lim\limits_{k \rightarrow \infty} 1/2^k = 0$. 
But what is it in fact ?

Consider that we have an initial infinite set of unsolved cases. 
Dividing it in two parts still gives an infinity of unsolved cases. 
This means that whatever the number of steps we apply, we always get an infinity of numbers that potentially deny $P$. 
Further, if we try to count the final number of cases that may refute $P$ we get $\lim\limits_{k \rightarrow \infty} \infty / 2^k = \infty/\infty$  which is undefined. So, the number of potential counter-examples may be anything : 0, any finite number or infinity.

Let us study a particular case of this method of proof driven by the binary code of a natural number.

\paragraph{Remark} Before going further, note that we focus this method of proof on infinite subsets because this is the most complex case. For finite sets, if we consider $N$ the initial number of unsolved cases, this finite number is divided by 2 at each step. The sequence obtained is a strictly decreasing sequence of positive numbers starting with a finite positive integer. So, in the worst case, we get at most one counter-example after something around $\log_2(N)$ steps where $\log_2$ identifies the logarithm in base 2 function.

\section{Proof by dichotomy driven by the euclidean division of natural numbers by $2^k$}
\subsection{Notations and conventions}
Following, we use the notation $a\,/\,b$ where 
$a,b \in \setN,\; b\neq 0$ 
to denote the quotient of the euclidean division of $a$ by $b$. 
On the same way, $a\,\%\,b$ identifies the remainder of this division also called remainder of $a$ modulo $b$.

By the definition of the euclidean division and for any $k \in \setN$, we can write any natural number $n$ as $n=2^k \cdot q_{k,n} + r_{k,n}$ where $q_{k,n} = n\,/\,2^k$ and $r_{k,n} = n\,\%\,2^k$. This normalization is unique. We use this property to build the dichotomy method we discuss further.

\subsection{Construction}
Let $P$ be a proposition we want to prove on $\setN$. Let $U_0(P)=\setN$ be the initial set of unsolved natural numbers. Note that we can also start with any infinite subset of $\setN$ if we need to prove $P$ only on this subset.
At step $k>0$, we divide $U_{k-1}(P)$ into two parts by the use of the parity of $n\,/\,2^k, n\in U_{k-1}(P)$. On one side, the $n$ having odd quotients. On the other side, the $n$ having even ones.
We suppose that we can prove $P$ on exactly one of these two subsets of $U_{k-1}(P)$.

This constructs a proof by dichotomy driven by the euclidean division of natural numbers by $2^k$.

Following, we prove that there cannot exist more than one counter-example of $P$.

\subsection{Validity of a proof by dichotomy driven by the euclidean division of natural numbers by \boldmath $2^k$}
We claim the following theorem

\begin{theorem}
\label{thm:general}
Let $P$ be a proposition on $\setN$. If it exists a proof of $P$ by dichotomy driven by the euclidean division of natural numbers by $2^k$, then $P$ is satisfied by any natural number but at most one.
\end{theorem}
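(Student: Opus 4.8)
The plan is to unwind the two recursively defined families $(U_k(P))$ and $(S_k(P))$ into an explicit description, and then read the conclusion off that description. The first step is the bookkeeping identity $\setN = S_k(P)\sqcup U_k(P)$, proved by induction on $k$: it holds for $k=0$ since $S_0(P)=\emptyset$ and $U_0(P)=\setN$, and it is preserved because at step $k$ we have $U_{k-1}(P)=U_{k,1}(P)\sqcup U_{k,2}(P)$, $U_k(P)=U_{k,2}(P)$ and $S_k(P)=S_{k-1}(P)\cup U_{k,1}(P)$. Combined with the fact already noted in the excerpt that every member of $S_k(P)$ satisfies $P$, this shows that any natural number not lying in $U_\infty(P):=\bigcap_{k\ge 1}U_k(P)$ satisfies $P$. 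Hence the set of counter-examples of $P$ is contained in $U_\infty(P)$, and the theorem reduces to the single assertion $\lvert U_\infty(P)\rvert\le 1$.

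The second step is to describe $U_k(P)$ explicitly. At step $k$ we split $U_{k-1}(P)$ according to the parity of the quotient $q_{k,n}=n\,/\,2^{k}$, that is, according to one particular binary digit of $n$, and we retain the half on which $P$ is not yet proved. Unwinding this, one produces a sequence $(c_k)_{k\ge1}$ with $c_k\in\{0,1\}$ — the digit value retained at each step — such that $U_k(P)$ is exactly the set of natural numbers whose first $k$ relevant binary digits are $c_1,\dots,c_k$; in particular each $U_k(P)$ is a residue class of the appropriate power of $2$, the family $(U_k(P))$ is nested and decreasing, and letting $k$ run through all steps prescribes, digit by digit, the entire binary expansion of any element of $U_\infty(P)$.

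The third step is the arithmetic observation that finishes the proof: a natural number has only finitely many nonzero binary digits. Therefore either the retained sequence $(c_k)$ contains infinitely many ones, in which case no natural number can have all of its digits equal to the prescribed $c_k$, so $U_\infty(P)=\emptyset$ and $P$ in fact holds everywhere; or $(c_k)$ is eventually zero, in which case there is exactly one natural number whose binary expansion matches every $c_k$, so $U_\infty(P)$ is a singleton. In either case $\lvert U_\infty(P)\rvert\le 1$, which is the claim.

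I expect the main obstacle to be the second step: making the phrase ``one prescribed digit per step'' fully rigorous, i.e. carefully tracking, relative to the digits already fixed by the earlier steps, which digit of $n$ the parity of $n\,/\,2^{k}$ controls, so that the nested family $(U_k(P))$ really does pin down the whole binary expansion in the limit. Once that description is secured, the monotone-intersection argument and the finiteness-of-binary-expansion argument are routine. Note also that the density heuristic of the previous section plays no role here: the bound on the number of counter-examples comes entirely from the residue-class structure of the $U_k(P)$, not from the fact that their density tends to $0$.
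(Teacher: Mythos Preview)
Your proposal is correct and rests on the same underlying observation as the paper: each dichotomy step fixes one binary digit of the surviving numbers, so $U_k(P)$ is a residue class and $\bigcap_k U_k(P)$ is governed by a single infinite binary string which, because natural numbers have finite expansions, can match at most one element of $\setN$. The paper organizes the argument differently, however: rather than form $U_\infty(P)$ and bound its size directly, it assumes a counter-example $n$ exists, takes an arbitrary $m\neq n$, locates the first step at which the relevant digits of $m$ and $n$ disagree, and shows that $m$ must be solved at that step; this forces a case split according to whether that step occurs before or after $n$ has run out of significant bits ($k\le k_0$ versus $k>k_0$). Your structural version is tidier and has the bonus that your closing dichotomy---$(c_k)$ contains infinitely many ones versus $(c_k)$ is eventually zero---is exactly what the paper later isolates separately as Corollaries~\ref{cor:no_counter_example} and~\ref{cor:may_be_counter_example}; you get these for free inside the main proof. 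The indexing worry you flag is genuine (the paper's own bookkeeping is loose on the same point), but it does not affect the validity of either argument.
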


Before we introduce the proof of theorem~\ref{thm:general}, remind that we maintain two sequences of sets along such a proof : $\left ( U_k(P)\right)_{k\in\setN}$, the set of unsolved cases and $\left ( S_k(P)\right)_{k\in\setN}$, the set of solved cases. Furthermore, for any $k$, the pair $(U_k(P),S_k(P))$ is a partition of the starting set $U_0(P)$.

Note now, that any member $n$ of $U_k(P)$ migrates to $S_{k+1}(P)$ if and only if we can prove that $n$ satisfies $P$. So, any member of $S_k(n)$ verifies $P$ whatever $k$ is.

Let us prove theorem~\ref{thm:general} now.

\begin{proof}
Under the conditions of theorem~\ref{thm:general}, let us suppose that there exist a finite natural number $n$ that refutes $P$ and prove that $n$ is the only one. 

As $n$ denies $P$, $n$ cannot be a member of any $S_k(P)$ because the members of these sets verify $P$ by the construction of this set. This infers that $n\in U_k(P)$ for any $k\in \setN$.

It is clear that for any natural number $m\neq n$, there exist at most one $k\in \setN$ such that $m = 2^k q_{k,m} + r_{k,m}$ and $n=2^k q_{k,n} + r_{k,n}$ with different parities of $q_{k,m}$ and $q_{k,n}$. Consider the smallest of these $k$. Thus we have $r_{k,n} = r_{k,m}$.

The natural number $n$ is supposed finite. Then we can define $k_0$ such that $2^{k_0}>n\geq 2^{k_0-1}$.
That is $n = 2^{k_0-1} + r$ where $r=n\,\%\,2^{k_0-1}$. Let us consider two cases : when $k\leq k_0$ and when $k > k_0$.

\paragraph{Case $k\leq k_0$}
We know that $n\in U_{k-1}(P)$ otherwise we should have proved that $n$ verifies $P$. On the same way, it is easy to see that $m\in U_{k-1}(P)$ as it shares the same remainder modulo $2^k$ with $n$. But $n\in U_{k}(P)$ inferring that we can prove $P$ for any member of $U_{k-1}(P)$ having a parity different from $q_{k,n}$ one by hypothesis. As $q_{k,m}$ and $q_{k,n}$ have different parities and we are not able to conclude that $n$ verifies or not $P$ at this step, we get that $m$ verifies $P$ by the definition of a proof by dichotomy.

\paragraph{Case $k > k_0$}
As in the previous case , $n\in U_{k-1}(P)$ and $m\in U_{k-1}(P)$ for the same reasons.
The defined $k_0$ verifies that $n = 2^{k_0-1} + r_{k_0-1,n}$. 
That infers that $q_{k,n} = 0$ thus that $q_{k,n}$ is necessarily even.
Furthermore, as $n$ refutes $P$, we are not able to prove that $P$ is satisfied for any even $q_k$, $k>k_0$. 
By deduction from the hypothesis, $P$ is satisfied by any natural number with an odd quotient by the euclidean division by $2^k$.
As $q_{k,n}$ and $q_{k,m}$ have opposite parities, $q_{k,m}$ is odd and $m$ satisfies $P$.

In conclusion, whatever the case is, if a natural number $m$ differs from $n$, it satisfies $P$ and $n$ is the only unsolved natural number against $P$.
So, if a number $n$ refutes $P$ under the conditions of theorem~\ref{thm:general} it's the only one. Finally, $P$ is verified for any natural number but at most one.
\end{proof}
Theorem~\ref{thm:general} claims that we cannot have more than one natural number that refutes $P$ once a proof of $P$ by dichotomy driven by the euclidean division of natural numbers by $2^k$ is established. 
In fact, the proof does not insure the existence of a counter-example. 
It just claims that there exist at most one unsolved natural number. Let us introduce a first corollary describing the consequences of the existence of a counter-example.
\begin{coro}
\label{cor:counter_example_exist}
Under the hypothesis of theorem~\ref{thm:general}, the existence of a counter-example infers that there exist $K\in \setN$ such that for any $k>K$ we are only able to prove $P$ when the quotient by euclidean division by $2^k$ is odd.
\end{coro}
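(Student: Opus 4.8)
The plan is to extract the argument already contained in the ``Case $k>k_0$'' part of the proof of Theorem~\ref{thm:general} and to repackage it as a statement about the parity of the quotient.

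First I would assume, under the hypotheses of Theorem~\ref{thm:general}, that a counter-example $n$ exists. Exactly as in the proof of Theorem~\ref{thm:general}, since $n$ refutes $P$ it can belong to no $S_k(P)$, hence $n\in U_k(P)$ for every $k\in\setN$. Because $n$ is finite we may fix $k_0$ with $2^{k_0}>n\geq 2^{k_0-1}$, and I would then set $K=k_0$.

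Next, for any $k>K$ we have $2^k>2^{k_0}>n$, so the quotient $q_{k,n}=n\,/\,2^k=0$, which is even. By the construction of a proof by dichotomy driven by the euclidean division by $2^k$, at step $k$ the set $U_{k-1}(P)$ is split according to the parity of the quotient by $2^k$ and $P$ is proved on exactly one of the two classes, whose members then pass into $S_k(P)$, while the other class becomes $U_k(P)$. Since $n\in U_k(P)$, the class containing $n$ --- that is, the even-quotient class --- is the one that stays unsolved, so it must be the odd-quotient class on which $P$ is proved. Hence for every $k>K$ we are able to prove $P$ only for the natural numbers whose quotient by the euclidean division by $2^k$ is odd, which is exactly the claimed statement with this choice of $K$.

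I do not expect a genuine obstacle here, since the corollary is essentially a reformulation of one half of the proof of Theorem~\ref{thm:general}. The only points that deserve care are checking that $k>k_0$ indeed forces $q_{k,n}=0$ (so that from step $k_0$ on $n$ always lands in the even-quotient class), and making explicit the reading of ``we are only able to prove $P$'' as ``only the odd-quotient class is transferred into $S_k(P)$'', which is forced by $n$ remaining unsolved at every step.
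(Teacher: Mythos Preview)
Your proposal is correct and follows essentially the same approach as the paper: both extract the ``Case $k>k_0$'' argument from the proof of Theorem~\ref{thm:general}, observing that for $k>k_0$ the quotient $n\,/\,2^k=0$ is even, and since the counter-example $n$ must remain in $U_k(P)$ the odd-quotient class is the one on which $P$ is proved. Your write-up is in fact a bit more explicit than the paper's in justifying why $n\in U_k(P)$ forces the even-quotient class to be the unsolved one, but the underlying argument is identical.
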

This corollary may be easily deduced from the proof of theorem~\ref{thm:general}.
\begin{proof}
From the second case of the proof of theorem~\ref{thm:general} we get that if there exists an $n$ refuting $P$ and it is of the form  $n=2^{k_0}+(n\,\%\,2^{k_0})$ then for any $k>k_0,~n\,/\,2^k = 0$ so it is even. As $n$ cannot be in any $S_k(P)$, this infers that we are able to prove $P$ for any natural number $m$ verifying $m\,/\,2^k$ is odd and this whatever $k>k_0$ is. \end{proof}

From the proof of theorem~\ref{thm:general}, especially from the second case, it is easy to understand which number $n$ is unsolved. Effectively we can construct the binary code of $n$ by writing down from right to left 0 when we can prove $P$ for odd quotients and 1 otherwise while running the steps from the first one to the end.
The number $n$ is supposed finite so, let $k_0$ defined such that $n=2^{k_0} + (n\,\%\,k_0)$. Corollary~\ref{cor:counter_example_exist} claims that for every $k>k_0$, $P$ can be proved only for odd quotients. 
This corresponds to leading zeroes we can put on the binary code of $n$. 
So, we just have to follow the steps of the proof from the first one to the $k_0^{th}$ one.

Thus, we can conclude the proof of $P$ mainly in two ways.

On the one hand, we may be satisfied with a unique possible counter-example because it is out of interest for instance. In this case, we only need to know the value of $n$. The method to do this is described above.

On the other hand, if we need to finish the proof of $P$ on $U_0(P)$ we just have to deny the existence of a counter-example. For instance, we can proceed either by identifying this unsolved natural number and prove that it verifies $P$ in fact or by denying the existence of a unique case by any other way.

Following, we present some more corollaries of theorem~\ref{thm:general}. 
The first one may be perceived as a counter-proposition of corollary~\ref{cor:counter_example_exist}
\begin{coro}
\label{cor:no_counter_example}
Under the hypothesis of theorem~\ref{thm:general}, if for any $K\in\setN$ there exist at least a $k>K$ such that we can prove $P$ on even quotients by the division by $2^k$, then any finite natural number verifies $P$. That is, we can conclude that $P$ is verified over $U_0(P)$.
\end{coro}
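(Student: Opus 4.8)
The plan is to argue by contraposition, leaning entirely on Corollary~\ref{cor:counter_example_exist} and Theorem~\ref{thm:general}. First I would record the dichotomy that governs each step: at step $k$ the set $U_{k-1}(P)$ is split according to the parity of the quotient by $2^k$, and $P$ is established on exactly one of the two pieces. Hence the statement ``at step $k$ we can prove $P$ on the even quotients'' and the statement ``at step $k$ we are only able to prove $P$ on the odd quotients'' are mutually exclusive and exhaustive descriptions of what happens at step $k$. This observation turns the hypothesis of the corollary into the exact negation of the conclusion of Corollary~\ref{cor:counter_example_exist}.

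Next I would suppose, for contradiction, that $P$ admits a counter-example. By Theorem~\ref{thm:general} this counter-example is unique; call it $n$, and recall that $n$ is a finite natural number lying in every $U_k(P)$. Applying Corollary~\ref{cor:counter_example_exist} yields a threshold $K\in\setN$ such that for every $k>K$ we can prove $P$ only when the quotient of the euclidean division by $2^k$ is odd; equivalently, by the dichotomy recalled above, for every $k>K$ we are never able to prove $P$ on the even quotients at step $k$.

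But this contradicts the hypothesis of the corollary, which asserts that for every $K\in\setN$ there is some $k>K$ at which $P$ is proved on the even quotients. The contradiction shows that $P$ has no counter-example. By Theorem~\ref{thm:general} --- or simply by the definition of a counter-example --- every finite natural number then verifies $P$, that is, $P$ holds over all of $U_0(P)$.

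I do not expect a real obstacle here; the single point that needs care is the bookkeeping of the logical negation described in the first paragraph, namely verifying that ``provable on even quotients at step $k$'' and ``provable only on odd quotients at step $k$'' genuinely partition the possibilities for step $k$. This follows from the clause in the definition of a proof by dichotomy requiring that $P$ be established on exactly one of the two equally distributed parts, so that exactly one of these two descriptions applies to each step.
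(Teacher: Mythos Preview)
Your argument is correct and follows essentially the same route as the paper: you take the contrapositive of Corollary~\ref{cor:counter_example_exist}, observing that the hypothesis here is precisely the negation of its conclusion, and conclude that no counter-example can exist. The paper's own proof is a terser version of exactly this, so your additional bookkeeping about the even/odd dichotomy at each step is simply a careful unpacking of what the paper leaves implicit.
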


\begin{proof}
The proof of this corollary is easy to get. 
Indeed, note that the hypothesis of corollary~\ref{cor:no_counter_example} denies the conclusion of corollary~\ref{cor:counter_example_exist}, refuting by the same way the hypothesis of this corollary which is the existence of a finite counter-example of $P$. So, we cannot have a counter-example under the hypothesis of corollary~\ref{cor:no_counter_example}
\end{proof}

The following corollary may be seen as the reverse proposition of corollary~\ref{cor:counter_example_exist} in the sense that it gives a sufficient condition to insure the existence of a finite unsolved case.

\begin{coro}
\label{cor:may_be_counter_example}
Under the hypothesis of theorem~\ref{thm:general}, if we can find a $K\in \setN$ such that for any $k>K$ we are only able to prove $P$ when the quotient by euclidean division by $2^k$ is odd, then there exist a unique unsolved case $n$. \end{coro}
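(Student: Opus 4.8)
The plan is to separate the claim into \emph{existence} and \emph{uniqueness} of the unsolved case. Uniqueness is the easy half, and I would obtain it exactly as in the proof of Theorem~\ref{thm:general}: any two distinct natural numbers are, at some step $k$, separated by the parity of their quotient by $2^k$, and at that step whichever of them lies on the side on which $P$ gets proved is moved into $S_k(P)$; so at most one natural number can lie in every $U_k(P)$. Thus the substance is to show that, under the corollary's hypothesis, \emph{some} natural number does lie in every $U_k(P)$.

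To produce it, I would trace the decreasing sequence $\bigl(U_k(P)\bigr)_{k\in\setN}$ step by step. For $1\le k\le K$, step $k$ keeps one of the two halves of $U_{k-1}(P)$ cut out by the parity of the quotient by $2^k$; let $c_k\in\{0,1\}$ record that parity, so a number is retained at step $k$ precisely when its $k^{\text{th}}$ binary digit is $c_k$. An easy induction gives, for $k\le K$,
\[
U_k(P)=\bigl\{\,m\in\setN \;:\; m\,/\,2^j \equiv c_j \pmod 2 \text{ for every } 1\le j\le k\,\bigr\}.
\]
For $k>K$ the hypothesis forces the retained half to be the one with \emph{even} quotient by $2^k$, i.e. the one whose $k^{\text{th}}$ binary digit is $0$. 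Let $n$ be the natural number with binary digits $c_1,\dots,c_K$ in positions $1,\dots,K$ and $0$ in every other position. Then $n\in U_K(P)$ by the displayed description, and for each $k>K$ we have $n\,/\,2^k=0$, which is even, so $n$ also survives every step after $K$. Hence $n\in\bigcap_{k\in\setN}U_k(P)$, and since $\bigl(U_k(P),S_k(P)\bigr)$ partitions $U_0(P)$ for every $k$, the number $n$ belongs to no $S_k(P)$ and is therefore an unsolved case; together with uniqueness it is \emph{the} unsolved case.

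The main obstacle I expect is the passage to the intersection: a decreasing sequence of infinite subsets of $\setN$ may well have empty intersection, so the argument has to exploit the hypothesis crucially --- namely that from step $K+1$ on the retained half has a fixed parity, which simultaneously traps the survivors below $2^{K+1}$ and, together with the record $c_1,\dots,c_K$ of the first $K$ steps, pins down the explicit $n$ above. A secondary point worth stating is that this $n$ is exactly the unsolved number whose \emph{existence} Theorem~\ref{thm:general} does not by itself guarantee, so that Corollary~\ref{cor:may_be_counter_example} really is the converse of Corollary~\ref{cor:counter_example_exist}.
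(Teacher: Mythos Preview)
Your proof is correct and follows essentially the same route as the paper's: build $n$ by recording, step by step up to $K$, which parity class is retained, and then observe that for $k>K$ the retained (even-quotient) half always contains $n$ since $n/2^k=0$. The paper's version is simply terser, phrasing the same idea as $U_K(P)=\{2^{K+1}a+n:a\in\setN\}$ with the later steps progressively removing every term having $a\neq 0$, whereas you spell out the bit-recording $c_1,\dots,c_K$ and the intersection argument explicitly.
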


\begin{proof}
By following the steps from the first to the $K^{th}$ we previously saw that we can build a finite unsolved $n$ for $P$. In fact, $U_K(P) = \{2^{K+1} a + n, a\in \setN\}$. The further steps retrieve from $U_K(P)$ progressively all of its elements having $a\neq 0$. In conclusion, after the infinite application of the step, only $n$ stays. This validates corollary~\ref{cor:may_be_counter_example}.
\end{proof}

Recall that the existence of an unsolved case doesn't imply the existence of a counter-example. So, under only the conditions of corollary~\ref{cor:may_be_counter_example} we cannot affirm the existence of a counter-example.

A particular case of corollary~\ref{cor:may_be_counter_example} is the following.

\begin{coro}
\label{cor:only_odd}
Under the hypothesis of theorem~\ref{thm:general}, suppose that at any step $k$ we can only prove $P$ when $n\,/\,2^k$ is odd. In such a case, the only possible counter-example for $P$ is 0. If 0 is not a member of the initial set then, $P$ is verified on $U_0(P)$.
\end{coro}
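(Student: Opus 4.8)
The plan is to derive Corollary~\ref{cor:only_odd} as a direct specialization of Corollary~\ref{cor:may_be_counter_example}, and then use the explicit construction of the unsolved case from the proof of Theorem~\ref{thm:general}. First I would observe that the hypothesis ``at every step $k$ we can only prove $P$ when $n\,/\,2^k$ is odd'' is precisely the hypothesis of Corollary~\ref{cor:may_be_counter_example} with $K=0$: indeed, for every $k>0$ we can prove $P$ exactly on the numbers with odd quotient by $2^k$. Hence Corollary~\ref{cor:may_be_counter_example} applies and there is a unique unsolved natural number $n$, which is therefore the only possible counter-example of $P$.

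Next I would identify this $n$ explicitly. Recall from the discussion following the proof of Theorem~\ref{thm:general} that the binary code of the unsolved number is obtained by writing, from right to left, a $0$ at step $k$ when $P$ is proved for odd quotients and a $1$ otherwise. Since here we prove $P$ for odd quotients at \emph{every} step, every digit of the binary code of $n$ is $0$, so $n=0$. Equivalently, at each step $k$ the set $U_{k-1}(P)$ splits into the numbers with odd quotient by $2^k$ (on which $P$ is proved) and those with even quotient by $2^k$ (which remain unsolved); starting from $\setN$, the even-quotient class after step $1$ is $\{2a : a\in\setN\}$, after step $2$ it is $\{4a : a\in\setN\}$, and by an immediate induction $U_k(P)=\{2^{k}a : a\in\setN\}=2^{k}\setN$. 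The intersection $\bigcap_{k\in\setN} 2^{k}\setN = \{0\}$, so the unique element surviving every step is $0$.

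Finally I would conclude: the only candidate counter-example is $0$. If $0\notin U_0(P)$ — that is, if we are proving $P$ only on an infinite subset of $\setN$ not containing $0$ — then there is no unsolved case at all in $U_0(P)$, and therefore $P$ holds on all of $U_0(P)$.

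I do not expect any serious obstacle here: the statement is essentially a bookkeeping corollary. The only point requiring a little care is making the induction $U_k(P)=2^k\setN$ rigorous, i.e. checking that the ``even quotient by $2^k$'' condition on $U_{k-1}(P)=2^{k-1}\setN$ cuts out exactly $2^k\setN$ (writing $m=2^{k-1}a$, the quotient $m\,/\,2^k$ equals $a\,/\,2$, which is the right notion only after verifying $r_{k-1,m}=0$ forces $r_{k,m}\in\{0,2^{k-1}\}$ and the even-quotient branch picks $r_{k,m}=0$). Once that is in place, the rest is immediate from the previous corollaries.
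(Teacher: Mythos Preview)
Your proposal is correct and follows essentially the same route as the paper: both reduce to Corollary~\ref{cor:may_be_counter_example} with $K=0$ and identify the unique unsolved case as $0$. Your version is simply more detailed, supplying the explicit induction $U_k(P)=2^k\setN$ and the intersection argument where the paper is content to say that the $n$ built in the proof of Corollary~\ref{cor:may_be_counter_example} is $0$.
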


\begin{proof}
The proof of corollary~\ref{cor:only_odd} is immediate from corollary~\ref{cor:may_be_counter_example}. 
In fact it is corollary~\ref{cor:may_be_counter_example} with $K=0$. So, the natural number $n$ build in its proof is 0 inferring corollary~\ref{cor:only_odd} by the same way.
\end{proof}

\subsection{Conclusion of the validation}
We claimed that a proof by dichotomy driven by the euclidean division by $2^k$ can give at most one counter-example of $P$. On the one hand, if among the infinite application of the step, we can regularly prove that $P$ is satisfied for an even quotient, then this counter-example does not exist. On the other hand, if from a given $K$ we are only able to prove $P$ for numbers having an odd quotient, we may not have more than one unsolved case. The path followed during the $K$ first applications of the step  gives an indication on a potential counter-example. If there exists an unsolved case $n$, we are free to ignore $n$, to prove separately either $P(n)$ or not or to use another way to prove the non existence of a single counter example. 

Let us take Collatz problem as an example of usage of this method of proof.

\section{Example : application to Collatz problem}

We warn the reader that we do not intend to give neither a proof nor a refutation of Collatz problem. We just point out that if someone can build a proof of this problem by dichotomy driven by the euclidean division by $2^k$, then it will be solved for any natural number different from 0.

\subsection{Definition of the problem}
The Collatz problem is known under many different names. Some call it "the 3x+1 problem" , "Syraccuse problem" or Ulam problem and so on. 

This problem is easy to define and looks like an easy one but many scientists proposed some other formulations of it expecting that a proof will come from this model but none leads to a positive result. Neither a negative one. So it is still an open problem. 
Firstly, let us recall it.

Collatz defined a function on natural numbers that we shall denote by $C(n)$. We can define it by $C(n) = n/2$ if $n$ is even and $C(n) = 3n+1$ if $n$ is odd. There exist some different formulations of this function but this is not important for our purpose.

Then Collatz looked at successive applications of $C$ on any natural number $n$. Such a way, one can build a sequence $(n, C(n), C^2(n), \cdots)$ issued from any not null natural number. It seems that all these sequences contains at least one time the natural number 1. Numerous numerical tests failed to find a counter-example until today.

\subsection{Such a proof by dichotomy of Collatz problem would validate it}
On the one hand, let us suppose that someone establish a proof of Collatz problem by dichotomy driven by parity of euclidean division by $2^k$. Theorem~\ref{thm:general} states that at most one natural number $n$ does not satisfy $P$.

As a first approach of this case, it is easy to verify that $0$ denies Collatz problem. Effectively, $C(0)=0/2 = 0$ as 0 is even. Then Collatz problem would be positively solved for every non null natural numbers under the condition that one could establish such a proof by dichotomy. 

Another way to prove Collatz problem from the supposition of the existence of a proof by dichotomy driven by the euclidean division by $2^k$, consists to suppose that there exist a counter example $n\neq 0$ that doesn't satisfy Collatz problem. 
Theorem~\ref{thm:general} states that it would be the only one. But the sub-sequence starting from $C(n)$ won't neither contain 1. As $C(n)$ cannot be equal to $n$ when $n\neq 0$ by definition of $C$, there should exist at least two natural numbers denying Collatz problem. This denies theorem~\ref{thm:general}, refuting by the same way the existence of such an $n$.

A more precise look at what we talked about would be such a proof shows that as 0 would be the only counter-example. As a consequence we have to prove that Collatz problem is verified for any odd number at the first step. This affirmation is well-known for a long time and didn't bring any solution until today. So, we do not expect the existence of such a proof to solve Collatz problem. Just let us say that these two deductions from the existence of such a proof on this problem are more samples of how to use this method of proof than trying to solve Collatz problem this way.

\section{Conclusion and perspectives}
We defined what we mean by a proof of a proposition $P$ by dichotomy driven by the euclidean division by $2^k$. We stated that at most one natural number may deny $P$ under the condition that such a proof exists for $P$. So, this method of proof may be used to prove the validity of some propositions on $\setN$.

In the case of the existence of such a proof for Collatz problem, this problem will be valid for every non null natural numbers. Our last remark on this example doesn't seem to offer a solution to Collatz problem.

A remark we can do at this point is that this method of proof can be quite easily extended to any infinite countable set. Indeed, the definition of a countable set $S$ claims that there exist at least one one-to-one mapping between $\setN$ and $S$. Let us take one. Say that this mapping is $f$ and that $f$ is defined on $\setN$ with values in $S$ for instance. To prove $P(s)$ for any $s\in S$, it is sufficient to prove $P(f(n))$ for any $n\in\setN$. As it is possible to say that $P(f(n))$ is in fact a proposition on $\setN$, a proof by dichotomy driven by euclidean division by $2^k$ may solve the problem in the hypothesis that such a proof exists.

At the beginning of this paper, we defined a proof by dichotomy, then we studied one particular case. Very probably, there exist some other ways to partition $U_k(P)$ giving the same unique possibility of counter-example. We did not explore deeply this possibility. Thus, we wonder if it is possible to find some other possibilities and to make some generalizations of this type of proofs.

Further, a deeper look at the proof of the theorem shows that we never used the fact that the set of unsolved cases is divided in two equally distributed partitions. Then there may exist some others ways to obtain a proof that does not give more than one counter-example.

\end{document}